\newtheorem{theorem}{Theorem}[section]
\newtheorem{lemma}[theorem]{Lemma}
\theoremstyle{definition}
\newtheorem{definition}[theorem]{Definition}
\theoremstyle{remark}
\numberwithin{equation}{section}
\begin{document}

\title[]
 {Existence of non-trivial solutions for nonlinear fractional Schr\"{o}dinger-Poisson equations}

%    Information for first author

\author[]{Kexue Li}
\address{School of Mathematics and Statistics,
Xi'an Jiaotong University,
 Xi'an
710049, China.}
%    Current address
\email{kexueli@gmail.com}
%%    \thanks will become a 1st page footnote.
%\thanks{}

%    General info
\subjclass[2010]{35Q40, 58E30}

%%\date{January 1, 2001 and, in revised form, June 22, 2001.}

%%\dedicatory{This paper is dedicated to our advisors.}

\keywords{Fractional Schr\"{o}dinger-Poisson equation; Perturbation method; Nontrivial solution.}

\begin{abstract}
We study the nonlinear fractional Schr\"{o}dinger-Poisson equations
\begin{equation*}
 \ \left\{\begin{aligned}
&(-\Delta)^{s}u+u+\phi u=f(x,u), \ \ \ \ \mbox{in} \ \mathbb{R}^{3},\\
&(-\Delta)^{t}\phi=u^{2}, \ \ \ \  \ \  \ \ \ \  \ \ \ \  \ \ \ \ \ \   \  \ \  \mbox{in} \ \mathbb{R}^{3},
\end{aligned}\right.
\end{equation*}
where $s,t\in (0,1]$, $2t+4s>3$. Under some assumptions on $f$, we obtain the existence of non-trivial solutions. The proof is based on the perturbation method and the mountain pass theorem.

\end{abstract}

\maketitle

%%%%%%%%%%%%%%%%%%%%%%%%%%%%%%%%%%%%%%%%%%%%%%%%%%%%%%%%%%%%%%%%%%%%%%%%%%%%%%%%%%%%%%%%%%%%%%%%%%%%%%%%%%%%%%%%%%%%%%%%%%%%%%%%%%%%%%%%%%%%%%%%%%%%%%%%%%%

\section{Introduction}
In this paper, we are concerned with the existence of non-trivial solutions for the following fractional Schr$\ddot{\mbox{o}}$dinger-Poisson equation
\begin{equation}\label{fsp}
 \ \left\{\begin{aligned}
&(-\Delta)^{s}u+u+\phi u=f(x,u), \ \ \ \ \mbox{in} \ \mathbb{R}^{3},\\
&(-\Delta)^{t}\phi=u^{2}, \ \ \ \  \ \  \ \ \ \  \ \ \ \  \ \ \ \ \ \   \  \ \  \mbox{in} \ \mathbb{R}^{3},
\end{aligned}\right.
\end{equation}
where $s,t\in (0,1]$, $2t+4s>3$, $(-\Delta)^{s}$ denotes the fractional Laplacian.

When $s=t=1$, the equation (\ref{fsp}) reduces to Schr$\ddot{\mbox{o}}$dinger-Poisson equation, which describes system of identical charged particles interacting each other in the case where magnetic effects can be neglected \cite{V,Ruiz}.
When $\phi=0$, (\ref{fsp}) reduces to a fractional Schr$\ddot{\mbox{o}}$dinger equation, which is a fundamental equation in fractional quantum mechanics \cite{Laskin,N}.

Recently, some authors proposed a new approach called perturbation method to study the quasilinear elliptic equations, see \cite{LLW}. The idea is to get the existence of critical points of the perturbed energy functional $I_{\lambda}$ for $\lambda>0$ small and then taking $\lambda\rightarrow 0$ to obtain solutions of original problems. Very recently, Feng \cite{Feng} used the perturbation method to study the Schr$\ddot{\mbox{o}}$dinger-Poisson equation
\begin{equation}\label{Feng}
 \ \left\{\begin{aligned}
&-\Delta u+u+\phi u=f(x,u), \ \ \ \ \ \  \  \ \ \ \mbox{in} \ \mathbb{R}^{3},\\
&(-\Delta)^{\alpha/2}\phi=u^{2}, \ \lim_{|x|\rightarrow \infty}\phi(x)=0, \ \ \mbox{in} \ \mathbb{R}^{3},
\end{aligned}\right.
\end{equation}
where $\alpha\in (1,2]$. Under some conditions, the problem (\ref{Feng}) possesses at least a nontrivial solution.

We point out that when  $s=1$ and $t\in (\frac{1}{2},1]$, the problem (\ref{fsp}) boils down to (\ref{Feng}). The main result of this paper is described as follows.
\begin{theorem}\label{theorem}
Suppose $f$ satisfies the following conditions: \\
(A1) For every $x\in \mathbb{R}^{3}$ and $u\in \mathbb{R}$, there exist constants $C_{1}>0$ and $p\in [2,2^{\ast}_{s})$ such that
\begin{align*}
|f(x,u)|\leq C_{1}(|u|+|u|^{p-1}),
\end{align*}
where $2^{\ast}_{s}=\frac{6}{3-2s}$ is the fractional critical Sobolev exponent; \\
(A2) $f(x,u)=o(|u|)$, $|u|\rightarrow 0$, uniformly on $\mathbb{R}^{3}$; \\
(A3) there exists $\mu>4$ such that
\begin{align*}
0<\mu F(x,u)\leq uf(x,u)
\end{align*}
holds for every $x\in \mathbb{R}^{3}$ and $u\in \mathbb{R}\backslash \{0\}$, where $F(x,u)=\int_{0}^{u}f(x,s)ds$; \\
Then problem (\ref{fsp}) has at least a nontrivial solution.
\end{theorem}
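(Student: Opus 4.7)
The plan is to apply the perturbation method combined with the mountain pass theorem, following the scheme pioneered in \cite{LLW} and adapted to the Schr\"odinger-Poisson setting in \cite{Feng}.

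First, I would reduce (\ref{fsp}) to a single equation in $u$. Since $2t+4s>3$, Sobolev embedding places $u^{2}\in L^{6/(3+2t)}(\mathbb{R}^{3})\hookrightarrow D^{-t,2}(\mathbb{R}^{3})$ whenever $u\in H^{s}(\mathbb{R}^{3})$, so the second equation admits a unique weak solution
\[
\phi_{u}(x)=c_{t}\int_{\mathbb{R}^{3}}\frac{u^{2}(y)}{|x-y|^{3-2t}}\,dy\in D^{t,2}(\mathbb{R}^{3}),
\]
and the map $u\mapsto\phi_{u}$ is $C^{1}$, homogeneous of degree $2$, with $\int\phi_{u}u^{2}\,dx\geq 0$. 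Substituting $\phi_{u}$ reduces the problem to finding critical points of the $C^{1}$ functional
\[
I(u)=\tfrac{1}{2}\|u\|_{H^{s}}^{2}+\tfrac{1}{4}\!\int_{\mathbb{R}^{3}}\!\phi_{u}u^{2}\,dx-\!\int_{\mathbb{R}^{3}}\!F(x,u)\,dx.
\]

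Second, I would introduce a small coercive perturbation: fix $r\in(p,2^{\ast}_{s})$ and for $\lambda\in(0,1]$ set
\[
I_{\lambda}(u)=I(u)+\frac{\lambda}{r}\!\int_{\mathbb{R}^{3}}|u|^{r}\,dx\quad\text{on } E=H^{s}(\mathbb{R}^{3})\cap L^{r}(\mathbb{R}^{3}).
\]
Conditions (A1)-(A2) together with the positivity of the Poisson term produce a uniform mountain pass geometry at the origin, while (A3) with $\mu>4$---matching the quartic scaling of $\tfrac14\!\int\!\phi_{u}u^{2}$---yields a fixed $e\in E\setminus\{0\}$ with $I_{\lambda}(e)<0$ for all small $\lambda$. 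The extra $L^{r}$-term supplies enough integrability and coercivity for $I_{\lambda}$ to verify the Palais-Smale condition at each level, so the mountain pass theorem produces critical points $u_{\lambda}\in E$ at levels $c_{\lambda}\in[\alpha,C]$ independent of $\lambda$.

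Third, combining the identity $\mu I_{\lambda}(u_{\lambda})-\langle I'_{\lambda}(u_{\lambda}),u_{\lambda}\rangle\leq \mu c_{\lambda}$ with (A3) and $\mu>4$ gives the $\lambda$-uniform bounds $\|u_{\lambda}\|_{H^{s}}\leq C$ and $\lambda\|u_{\lambda}\|_{r}^{r}\to 0$. Extracting a subsequence $u_{\lambda}\rightharpoonup u_{0}$ in $H^{s}$ (and a.e.), the perturbation term drops out in the weak formulation, (A1)-(A2) combined with pointwise convergence handle the $f$-term, and continuity of $u\mapsto\phi_{u}$ together with a Br\'ezis-Lieb argument gives convergence of the nonlocal term; therefore $I'(u_{0})=0$.

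The main obstacle is ruling out $u_{0}\equiv 0$, since on $\mathbb{R}^{3}$ the embeddings $H^{s}\hookrightarrow L^{q}(\mathbb{R}^{3})$ for $q\in(2,2^{\ast}_{s})$ are not compact. I would apply Lions' vanishing lemma: if $\sup_{y\in\mathbb{R}^{3}}\int_{B_{1}(y)}|u_{\lambda}|^{2}\,dx\to 0$, then $u_{\lambda}\to 0$ in $L^{q}(\mathbb{R}^{3})$ for every $q\in(2,2^{\ast}_{s})$, and (A1)-(A2) with the Hardy-Littlewood-Sobolev control of the Poisson term force $I_{\lambda}(u_{\lambda})\to 0$, contradicting $c_{\lambda}\geq\alpha>0$. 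Concentration therefore occurs, and a translation $u_{\lambda}(\cdot+y_{\lambda})$---under which (A1)-(A3) remain valid since they are $x$-uniform---furnishes a nontrivial weak limit which solves (\ref{fsp}). Reconciling this concentration-compactness step with the nonlocal Poisson coupling and the $x$-dependence of $f$ is where the principal analytic work lies.
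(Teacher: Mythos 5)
Your overall architecture (reduction to a single equation via $\phi^{t}_{u}$, mountain pass geometry for a perturbed functional $I_{\lambda}$ using (A1)--(A2) at the origin and (A3) with $\mu>4$ against the quartic Poisson term, uniform bounds from $\mu I_{\lambda}(u_{\lambda})-\langle I_{\lambda}'(u_{\lambda}),u_{\lambda}\rangle$, then $\lambda\to 0$) is the same as the paper's. But your choice of perturbation defeats the purpose of the method. You add $\frac{\lambda}{r}\int_{\mathbb{R}^{3}}|u|^{r}dx$ with $r\in(p,2^{\ast}_{s})$; since $r$ is subcritical, $H^{s}(\mathbb{R}^{3})\hookrightarrow L^{r}(\mathbb{R}^{3})$ already, so $E=H^{s}\cap L^{r}=H^{s}$ and the term adds no new integrability. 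More importantly, $\|u\|_{r}^{r}$ is translation invariant, so the perturbed functional has exactly the same compactness defect as $I$ itself: a bounded Palais--Smale sequence obtained by translating a fixed profile to infinity still fails to converge, and your claim that ``the extra $L^{r}$-term supplies enough $\dots$ coercivity for $I_{\lambda}$ to verify the Palais--Smale condition'' is unjustified. The paper instead perturbs by $\frac{\lambda}{2}\int_{\mathbb{R}^{3}}V(x)u^{2}dx$ with a \emph{coercive} potential $V$ (condition (V): $V\geq V_{0}>0$ and $\mathrm{meas}\{V\leq M\}<\infty$), which breaks translation invariance and makes the working space $E$ compactly embedded in $L^{p}(\mathbb{R}^{3})$ for $2\leq p<2^{\ast}_{s}$; that compact embedding is precisely what makes the Palais--Smale condition hold for each fixed $\lambda$ (Lemma 3.1 of the paper).

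Your fallback --- concentration-compactness plus a translation $u_{\lambda}(\cdot+y_{\lambda})$ --- does not repair this, because $f$ depends on $x$ with no periodicity or asymptotic assumption. The hypotheses (A1)--(A3) being ``uniform in $x$'' gives you uniform \emph{bounds}, not invariance of the equation: after translating by $y_{\lambda}$ with $|y_{\lambda}|\to\infty$, the weak limit solves $(-\Delta)^{s}w+w+\phi^{t}_{w}w=g(x,w)$ for whatever limit $g$ of $f(\cdot+y_{\lambda},\cdot)$ exists (if any), not the original problem (\ref{fsp}). You flag this yourself as ``where the principal analytic work lies,'' but it is not a technical loose end --- it is the central obstruction, and under the stated hypotheses it cannot be resolved by translation. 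The fix is to choose the perturbation so that compactness is restored \emph{before} passing to the limit, as the paper does; the only place a Lions-type vanishing lemma is then needed is in showing that the limit $u$ of the critical points $u_{\lambda_{n}}$ is nontrivial, using the uniform lower bound $\|u_{n}\|_{p}\geq c>0$ coming from $I'_{\lambda_{n}}(u_{n})u_{n}=0$ together with (A1)--(A2).
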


The paper is organized as follows. In Section 2, we present some preliminaries results.
In Section 3, we will prove Theorem \ref{theorem}.

\section{Preliminaries}
For $p\in [1,\infty)$, we denote by $L^{p}(\mathbb{R}^{3})$ the usual Lebesgue space with the norm $\|u\|_{p}=\left(\int_{\mathbb{R}^{3}}|u|^{p}dx\right)^{\frac{1}{p}}$. For any $p\in [1,\infty)$ and $s\in (0,1)$, we recall some definitions of fractional Sobolev spaces and the fractional Laplacian $(-\Delta)^{s}$, for more details, we refer to \cite{DPV}.  $H^{s}(\mathbb{R}^{3})$ is defined as follows
\begin{align*}
H^{s}(\mathbb{R}^{3})=\left\{u\in L^{2}(\mathbb{R}^{3}): \int_{\mathbb{R}^{3}}(1+|\xi|^{2s})|\mathcal{F}u(\xi)|^{2}d\xi<\infty\right\}
\end{align*}
with the norm
\begin{align}\label{alternative}
\|u\|_{H^{s}}=\big(|\mathcal{F}u(\xi)|^{2}+|\xi|^{2s}|\mathcal{F}u(\xi)|^{2}\big)^{\frac{1}{2}},
\end{align}
where $\mathcal{F}u$ denotes the Fourier transform of $u$. By $\mathcal{S}(\mathbb{R}^{n})$, we denote the Schwartz space of rapidly decaying $C^{\infty}$ functions in $\mathbb{R}^{n}$. For $u\in \mathcal{S}(\mathbb{R}^{n})$ and $s\in (0,1)$, $(-\Delta)^{s}$ is defined by
\begin{align*}
(-\Delta)^{s}f=\mathcal{F}^{-1}(|\xi|^{2s}(\mathcal{F}f)), \ \forall \xi\in \mathbb{R}^{n}.
\end{align*}
By Plancherel's theorem, we have $\|\mathcal{F}u\|_{2}=\|u\|_{2}$, $\||\xi|^{s}\mathcal{F}u\|_{2}=\|(-\Delta)^{\frac{s}{2}}u\|$. Then by (\ref{alternative}), we get the equivalent norm
\begin{align*}
\|u\|_{H^{s}}=\left(\int_{\mathbb{R}^{3}}(|(-\Delta)^{\frac{s}{2}}u(x)|^{2}+|u(x)|^{2})dx\right)^{\frac{1}{2}}.
\end{align*}
For $s\in (0,1)$, the fractional Sobolev space $D^{s,2}(\mathbb{R}^{3})$ is defined as follows
\begin{align*}
D^{s,2}(\mathbb{R}^{3})=\left\{u\in L^{2^{\ast}_{s}}(\mathbb{R}^{3}): |\xi|^{s}\mathcal{F}u(\xi)\in L^{2}(\mathbb{R}^{3})\right\},
\end{align*}
which is the completion of $C^{\infty}_{0}(\mathbb{R}^{3})$ with respect to the norm
\begin{align*}
\|u\|_{D^{s,2}}=\left(\int_{\mathbb{R}^{3}}|(-\Delta)^{\frac{s}{2}}u|^{2}dx\right)^{\frac{1}{2}}=\left(\int_{\mathbb{R}^{3}}|\xi|^{2s}|\mathcal{F}u(\xi)|^{2}d\xi\right)^{\frac{1}{2}}.
\end{align*}

\begin{lemma}\label{best}\emph{(Theorem 2.1 in \cite{C}).}
For any $s\in (0,\frac{3}{2})$, $D^{s,2}(\mathbb{R}^{3})$ is continuously embedded in $L^{2^{\ast}_{s}}(\mathbb{R}^{3})$, i.e., there exists $c_{s}>0$ such that
\begin{align*}
\left(\int_{\mathbb{R}^{3}}|u|^{2^{\ast}_{s}}dx\right)^{2/2^{\ast}_{s}}\leq c_{s}\int_{\mathbb{R}^{3}}|(-\Delta)^{\frac{s}{2}}u|^{2}dx, \ u\in D^{s,2}(\mathbb{R}^{3}).
\end{align*}
\end{lemma}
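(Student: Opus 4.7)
The plan is to derive the embedding as a consequence of the Hardy--Littlewood--Sobolev (HLS) inequality applied to a Riesz potential representation of $u$. Since $D^{s,2}(\mathbb{R}^{3})$ is by definition the completion of $C_{0}^{\infty}(\mathbb{R}^{3})$ in the $\|\cdot\|_{D^{s,2}}$ norm, it suffices to prove the stated inequality for $u\in C_{0}^{\infty}(\mathbb{R}^{3})$ and then extend by density.

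For $u\in C_{0}^{\infty}(\mathbb{R}^{3})$, set $g:=(-\Delta)^{s/2}u$, which lies in $\mathcal{S}(\mathbb{R}^{3})\subset L^{2}(\mathbb{R}^{3})$. On the Fourier side one has $\mathcal{F}g(\xi)=|\xi|^{s}\mathcal{F}u(\xi)$, so that $\mathcal{F}u(\xi)=|\xi|^{-s}\mathcal{F}g(\xi)$. Recognising $|\xi|^{-s}$ as the Fourier multiplier of the Riesz potential $I_{s}$, whose convolution kernel is $\gamma_{s}|x|^{-(3-2s)}$ for an explicit positive constant $\gamma_{s}$, I would identify $u=I_{s}g$ pointwise almost everywhere (the convolution integral converges since $g$ is Schwartz).

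Next, I would invoke the classical HLS inequality: for $0<\alpha<n$ and $1<p<q<\infty$ with $1/q=1/p-\alpha/n$, one has $\|I_{\alpha}g\|_{L^{q}(\mathbb{R}^{n})}\le C(\alpha,p,n)\,\|g\|_{L^{p}(\mathbb{R}^{n})}$. Specialising to $n=3$, $\alpha=s$, $p=2$ forces $q=6/(3-2s)=2^{\ast}_{s}$, and therefore
$$\|u\|_{2^{\ast}_{s}}=\|I_{s}g\|_{2^{\ast}_{s}}\le C(s,2,3)\,\|g\|_{2}=C(s,2,3)\,\|(-\Delta)^{s/2}u\|_{2}.$$
Squaring and writing $c_{s}:=C(s,2,3)^{2}$ yields the inequality of the lemma.

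To conclude, I would extend to arbitrary $u\in D^{s,2}(\mathbb{R}^{3})$ by density: any Cauchy sequence $\{u_{n}\}\subset C_{0}^{\infty}(\mathbb{R}^{3})$ in the $D^{s,2}$ norm is, by the estimate just obtained, also Cauchy in $L^{2^{\ast}_{s}}(\mathbb{R}^{3})$, and its limit agrees with the $L^{2^{\ast}_{s}}$ representative of $u$ built into the alternative description of $D^{s,2}(\mathbb{R}^{3})$. The one genuinely non-trivial input is HLS itself, whose standard proof proceeds via weak-type bounds for the Riesz potential and Marcinkiewicz interpolation (or via symmetric decreasing rearrangement); since the hypothesis $s\in(0,3/2)$ keeps us strictly inside the admissible exponent range $1<p<q<\infty$, no endpoint issues arise, and I would simply cite HLS from a standard source rather than reproduce its proof. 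This is the expected main obstacle---everything else is Fourier-analytic bookkeeping and a density argument.
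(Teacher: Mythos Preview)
The paper does not prove this lemma; it simply cites it as Theorem~2.1 in \cite{C} and uses it as a black box. Your sketch via the Hardy--Littlewood--Sobolev inequality is a standard and correct route to the same conclusion, and the exponent check $1/q=1/2-s/3$ with $q=2^{\ast}_{s}$ is exactly what forces the hypothesis $s<3/2$.

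One small slip worth cleaning up: for $u\in C_{0}^{\infty}(\mathbb{R}^{3})$ the function $g=(-\Delta)^{s/2}u$ is \emph{not} in $\mathcal{S}(\mathbb{R}^{3})$ in general, since $|\xi|^{s}\widehat{u}(\xi)$ fails to be smooth at the origin. What you actually need, and what is true, is $g\in L^{2}(\mathbb{R}^{3})$ (immediate from Plancherel), which is precisely the input for HLS at $p=2$. The identification $u=I_{s}g$ then holds as tempered distributions, or equivalently in $L^{2^{\ast}_{s}}$ once HLS guarantees the right-hand side lies there; you may want to phrase the pointwise identity that way rather than appealing to convergence of the convolution integral via Schwartz decay of $g$. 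With that adjustment the argument is complete.
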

We consider the variational setting of (\ref{fsp}). From Theorem 6.5 and Corollary 7.2 in \cite{DPV}, it is known that the space $H^{s}(\mathbb{R}^{3})$  is continuously embedded in $L^{q}(\mathbb{R}^{3})$ for any $q\in [2,2^{\ast}_{s}]$
 and the embedding $H^{s}(\mathbb{R}^{3})\hookrightarrow L^{q}(\mathbb{R}^{3})$ is locally compact for $q\in [1, 2^{\ast}_{s})$. \\
If $2t+4s> 3$, then $H^{s}(\mathbb{R}^{3})\hookrightarrow L^{\frac{12}{3+2t}}(\mathbb{R}^{3})$. For $u\in H^{s}(\mathbb{R}^{3})$, the linear operator $T_{u}: D^{t,2}(\mathbb{R}^{3})\rightarrow \mathbb{R}$ defined as
\begin{align*}
T_{u}(v)=\int_{\mathbb{R}^{3}}u^{2}vdx,
\end{align*}
By H$\ddot{\mbox{o}}$lder inequality and Lemma \ref{best},
\begin{align}\label{dt1}
|T_{u}(v)|\leq \|u\|^{2}_{12/(3+2t)}\|v\|_{2^{\ast}_{t}}\leq C\|u\|^{2}_{H^{s}}\|v\|_{D^{t,2}}.
\end{align}
Set
\begin{align*}
\eta(u,v)=\int_{\mathbb{R}^{3}}(-\Delta)^{\frac{t}{2}}u\cdot (-\Delta)^{\frac{t}{2}}vdx, \ u,v\in D^{t,2}(\mathbb{R}^{3}).
\end{align*}
It is clear that $\eta(u,v)$ is bilinear, bounded and coercive. The Lax-Milgram theorem implies that for every $u\in H^{s}(\mathbb{R}^{3})$, there exists a unique $\phi^{t}_{u}\in D^{t,2}(\mathbb{R}^{3})$ such that $T_{u}(v)=\eta(\phi_{u},v)$ for any $v\in D^{t,2}(\mathbb{R}^{3})$, that is
\begin{align}\label{dt2}
\int_{\mathbb{R}^{3}}(-\Delta)^{\frac{t}{2}}\phi^{t}_{u}(-\Delta)^{\frac{t}{2}}vdx=\int_{\mathbb{R}^{3}}u^{2}vdx.
\end{align}
Therefore, $(-\Delta)^{t}\phi^{t}_{u}=u^{2}$ in a weak sense. Moreover,
\begin{align}\label{LM}
\|\phi^{t}_{u}\|_{D^{t,2}}=\|T_{u}\|\leq C\|u\|^{2}_{H^{s}}.
\end{align}
Since $t\in (0,1]$ and $2t+4s>3$, then $\frac{12}{3+2t}\in (2,2^{\ast}_{s})$.
From Lemma \ref{best}, (\ref{dt1}) and (\ref{dt2}), it follows that
\begin{align}\label{dt}
\|\phi^{t}_{u}\|^{2}_{D^{t,2}}&=\int_{\mathbb{R}^{3}}|(-\Delta)^{\frac{t}{2}}\phi^{t}_{u}|^{2}dx=\int_{\mathbb{R}^{3}}u^{2}\phi^{t}_{u}dx\leq \|u\|^{2}_{\frac{12}{3+2t}}\|\phi^{t}_{u}\|_{2^{\ast}_{t}}\leq C\|u\|^{2}_{\frac{12}{3+2t}}\|\phi^{t}_{u}\|_{D^{t,2}}.
\end{align}
Then
\begin{align}\label{phi}
\|\phi^{t}_{u}\|_{D^{t,2}}\leq C\|u\|^{2}_{\frac{12}{3+2t}}.
\end{align}
For $x\in \mathbb{R}^{3}$, we have
\begin{align}\label{Riesz}
\phi^{t}_{u}(x)=c_{t}\int_{\mathbb{R}^{3}}\frac{u^{2}(y)}{|x-y|^{3-2t}}dy,
\end{align}
which is the Riesz potential \cite{Stein}, where
\begin{align*}
c_{t}=\frac{\Gamma(\frac{3-2t}{2})}{\pi^{3/2}2^{2t}\Gamma(t)}.
\end{align*}
Substituting $\phi^{t}_{u}$ in (\ref{fsp}), we have the fractional Schr$\ddot{\mbox{o}}$dinger equation
\begin{align}\label{fractional Schrodinger}
(-\Delta)^{s}u+u+\phi^{t}_{u}u=f(x,u), \ x\in \mathbb{R}^{3},
\end{align}
The energy functional $I: H^{s}(\mathbb{R}^{3})\rightarrow \mathbb{R}$ corresponding to problem (\ref{fractional Schrodinger}) is defined by
\begin{align*}
I(u)=\frac{1}{2}\int_{\mathbb{R}^{3}}(|(-\Delta)^{\frac{s}{2}}u|^{2}+u^{2})dx+\frac{1}{4}\int_{\mathbb{R}^{3}}\phi^{t}_{u}u^{2}dx-\int_{\mathbb{R}^{3}}F(x,u)dx.
\end{align*}
It is easy to see that $I$ is well defined in $H^{s}(\mathbb{R}^{3})$ and $I\in C^{1}(H^{s}(\mathbb{R}^{3}),\mathbb{R})$, and
\begin{align}\label{langle}
\langle I'(u),v\rangle=\int_{\mathbb{R}^{3}}\left((-\Delta)^{\frac{s}{2}}u(-\Delta)^{\frac{s}{2}}v+uv+\phi^{t}_{u}uv-f(x,u)v\right)dx, \ \ v\in H^{s}(\mathbb{R}^{3}).
\end{align}

\begin{definition}
(1) We call $(u,\phi)\in H^{s}(\mathbb{R}^{3})\times D^{t,2}(\mathbb{R}^{3})$ is a weak solution of (\ref{fsp}) if $u$ is a weak solution of (\ref{fractional Schrodinger}).\\
(2)We call  $u$ is a weak solution of (\ref{fractional Schrodinger}) if
\begin{align*}
\int_{\mathbb{R}^{3}}\left((-\Delta)^{\frac{s}{2}}u(-\Delta)^{\frac{s}{2}}v+uv+\phi^{t}_{u}uv-f(x,u)v\right)dx=0,
\end{align*}
for any $v\in H^{s}(\mathbb{R}^{3})$.
\end{definition}

Assume that the potential $V(x)$ satisfies the condition\\
(V)  $V\in C(\mathbb{R}^{3})$, $inf_{x\in R^{3}}V(x)\geq V_{0}>0$, where $V_{0}$ is a constant.
For every $M>0$, $meas\{x\in \mathbb{R}^{3}: V(x)\leq M\}<\infty$, where $meas(\cdot)$ denotes the Lebesgue measure in $\mathbb{R}^{3}$.

Let
\begin{align*}
E=\left\{u\in H^{s}(\mathbb{R}^{3}): \int_{\mathbb{R}^{3}}(|(-\Delta)^{\frac{s}{2}}u|^{2}+V(x)u^{2})dx<\infty\right\}.
\end{align*}
Then $E$ is a Hilbert space with the inner product
\begin{align*}
\langle u,v\rangle_{E}=\int_{\mathbb{R}^{3}}\big((-\Delta)^{\frac{s}{2}}u(-\Delta)^{\frac{s}{2}}v+V(x)uv\big)dx
\end{align*}
and the norm $\|u\|_{E}=\langle u,u\rangle^{\frac{1}{2}}_{E}$. By Lemma 2.3 in \cite{TK}, it is known that $E$ is compactly embedded in $L^{p}(\mathbb{R}^{3})$ for $2\leq p<2^{\ast}_{s}$.

For fixed $\lambda\in (0,1]$, we introduce the following inner product
\begin{align*}
\langle u,v\rangle_{H^{s}_{\lambda}}=\int_{\mathbb{R}^{3}}\big((-\Delta)^{\frac{s}{2}}u(-\Delta)^{\frac{s}{2}}v+\lambda V(x)uv\big)dx
\end{align*}
and the norm $\|u\|_{H^{s}_{\lambda}}=\langle u,u\rangle^{\frac{1}{2}}_{H^{s}_{\lambda}}$. Denote $E_{\lambda}=(E,\|\cdot\|_{H^{s}_{\lambda}})$.

Define the perturbed functional $I_{\lambda}: E\rightarrow \mathbb{R}$:
\begin{align}\label{functional}
I_{\lambda}(u)=I(u)+\frac{\lambda}{2}\int_{\mathbb{R}^{3}}V(x)u^{2}dx, \ \ \lambda\in (0,1].
\end{align}
\begin{lemma}\label{geometry}
Suppose that $V(x)\geq 0$ and $(A1)$, $(A2)$ hold. Then there exist $\rho>0$, $\eta>0$ such that for fixed $\lambda\in (0,1]$,
\begin{align*}
\inf_{u\in E, \ \|u\|_{E}=\rho}I_{\lambda}(u)>\eta,
\end{align*}
where $\rho$ and $\eta$ are independent of $\lambda$.
\end{lemma}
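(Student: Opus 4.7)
The plan is the standard mountain-pass-geometry estimate at the origin: combine the near-zero decay of $F$ from (A1)-(A2) with the Sobolev embeddings on $H^{s}(\mathbb R^{3})$, then convert the resulting $H^{s}$-estimate into one on the $E$-norm using the coercivity of $V$.

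First, (A1) and (A2) together imply that for every $\varepsilon>0$ there is a constant $C_{\varepsilon}>0$, independent of $\lambda$, with $|F(x,u)|\le \varepsilon u^{2}+C_{\varepsilon}|u|^{p}$ for all $(x,u)$, and hence $\int F(x,u)\,dx\le \varepsilon\|u\|_{2}^{2}+C_{\varepsilon}\|u\|_{p}^{p}$. Next, formula (\ref{Riesz}) shows $\phi^{t}_{u}\ge 0$ and (V) gives $V\ge 0$, so both $\tfrac{1}{4}\int\phi^{t}_{u}u^{2}\,dx$ and $\tfrac{\lambda}{2}\int Vu^{2}\,dx$ are nonnegative and may be discarded to obtain
\begin{equation*}
I_{\lambda}(u)\ge \tfrac{1}{2}\|u\|_{H^{s}}^{2}-\varepsilon\|u\|_{2}^{2}-C_{\varepsilon}\|u\|_{p}^{p}.
\end{equation*}
Invoking the continuous embeddings $H^{s}(\mathbb R^{3})\hookrightarrow L^{q}(\mathbb R^{3})$ (with constants $A,B>0$) for $q=2$ and $q=p\in[2,2^{*}_{s})$, and fixing $\varepsilon$ so that $\varepsilon A^{2}<\tfrac{1}{4}$, produces $I_{\lambda}(u)\ge \tfrac{1}{4}\|u\|_{H^{s}}^{2}-C'\|u\|_{H^{s}}^{p}$; since $p>2$ this is bounded below by $\tfrac{1}{8}\|u\|_{H^{s}}^{2}$ whenever $\|u\|_{H^{s}}$ is sufficiently small.

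Second, I pass from the $H^{s}$-norm to the $E$-norm. The bound $V\ge V_{0}>0$ yields $\|u\|_{2}^{2}\le V_{0}^{-1}\|u\|_{E}^{2}$, hence the one-sided comparison $\|u\|_{H^{s}}^{2}\le(1+V_{0}^{-1})\|u\|_{E}^{2}$; combined with the identity $\|u\|_{H^{s}}^{2}+\int Vu^{2}\,dx=\|u\|_{E}^{2}+\|u\|_{2}^{2}\ge\|u\|_{E}^{2}$, on the sphere $\|u\|_{E}=\rho$ at least one of $\|u\|_{H^{s}}^{2}\ge\rho^{2}/2$ or $\int Vu^{2}\,dx\ge\rho^{2}/2$ must hold. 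In the former regime the previous display directly gives $I_{\lambda}(u)\ge\rho^{2}/16-C''\rho^{p}\ge\rho^{2}/32$ once $\rho$ is chosen small; setting $\eta:=\rho^{2}/32$ completes the proof in this regime.

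The main obstacle will be the second regime $\int Vu^{2}\,dx\ge\rho^{2}/2$ with $\|u\|_{H^{s}}$ possibly much smaller than $\|u\|_{E}$, because $V$ is unbounded under (V): concentrating $u$ on the region where $V$ is large costs little in $H^{s}$-norm while still making $\|u\|_{E}=\rho$. Here a $\lambda$-independent lower bound cannot come from the discarded perturbation term $\tfrac{\lambda}{2}\int Vu^{2}\,dx$; instead the surviving $\tfrac{1}{2}\int u^{2}\,dx$ in $I(u)$ must be retained in the estimate and coupled with the compact embedding $E\hookrightarrow L^{p}$ (Lemma 2.3 of \cite{TK}) so that $C_{\varepsilon}\|u\|_{p}^{p}\le C_{\varepsilon}C_{p}^{p}\rho^{p}$ is absorbed for small $\rho$. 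Once this regime is closed uniformly in $\lambda$, the resulting $\rho$ and $\eta$ are indeed independent of $\lambda\in(0,1]$, as required.
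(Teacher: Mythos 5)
Your first regime is handled correctly and reproduces the paper's computation: the splitting $|F(x,u)|\le \varepsilon u^{2}+C_{\varepsilon}|u|^{p}$ from (A1)--(A2), discarding the nonnegative terms $\tfrac{1}{4}\int\phi^{t}_{u}u^{2}\,dx$ and $\tfrac{\lambda}{2}\int V u^{2}\,dx$, and the Sobolev embedding yield $I_{\lambda}(u)\ge \tfrac{1}{4}\|u\|_{H^{s}}^{2}-C\|u\|_{H^{s}}^{p}$. The genuine gap is your second regime, which you flag but do not close, and which in fact cannot be closed by the device you propose. When $\|u\|_{E}=\rho$ but $\|(-\Delta)^{\frac{s}{2}}u\|_{2}^{2}<\rho^{2}/2$, so that $\int Vu^{2}\,dx\ge\rho^{2}/2$, the term you propose to ``retain,'' namely $\tfrac{1}{2}\|u\|_{2}^{2}$, is itself a summand of $\|u\|_{H^{s}}^{2}$ and is therefore precisely the quantity that is small in this regime: a bump $c_{k}\varphi(\cdot-x_{k})$ concentrated where $V$ is large (and condition (V) forces $V$ to be unbounded, since $meas\{V\le M\}<\infty$ for every $M$) can have $\|u\|_{E}=\rho$ with $\|u\|_{H^{s}}$ arbitrarily small. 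Along such $u$ every $\lambda$-independent positive term of $I_{\lambda}$ tends to $0$ (the Poisson term is $O(\|u\|_{H^{s}}^{4})$ by the estimate (\ref{LMdt})), while the only surviving positive contribution is $\tfrac{\lambda}{2}\int Vu^{2}\,dx\le\tfrac{\lambda}{2}\rho^{2}$; hence no $\eta>0$ independent of $\lambda\in(0,1]$ can be extracted on the sphere $\{\|u\|_{E}=\rho\}$ from this decomposition. The embedding $E\hookrightarrow L^{p}$ does not help: it only controls the negative terms from above and does not supply the missing positive term of order $\rho^{2}$.

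For comparison, the paper's own proof does not confront this regime at all: it writes $I_{\lambda}(u)\ge\frac{(1-\varepsilon)\rho^{2}}{2}-\frac{C_{\varepsilon}C_{0}}{p}\rho^{p}$ directly on $\|u\|_{E}=\rho$, which amounts to identifying $\tfrac{1}{2}\|u\|_{E}^{2}$ with the $\lambda$-independent quadratic part of $I_{\lambda}$. That identification requires $\|u\|_{E}\le C\|u\|_{H^{s}}$, which holds only if $V$ is bounded above, or else the sphere must be taken with respect to $\|\cdot\|_{H^{s}}$ or $\|\cdot\|_{H^{s}_{\lambda}}$ rather than $\|\cdot\|_{E}$. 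So you have correctly located the weak point of the argument, but your proposal does not repair it, and as written your proof is incomplete.
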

\begin{proof}
By $(A1)$ and $(A2)$, for any $\varepsilon>0$, there exists $C_{\varepsilon}>0$ such that
\begin{align*}
|f(x,u)|\leq \varepsilon |u|+C_{\varepsilon}|u|^{p-1}, x \in\mathbb{R}^{3}, \ u\in \mathbb{R}.
\end{align*}
Then
\begin{align*}
|F(x,u)|\leq \frac{\varepsilon}{2}|u|^{2}+\frac{C_{\varepsilon}}{p}|u|^{p}.
\end{align*}
For $\rho>0$, set
\begin{align*}
\Sigma_{\rho}=\{u\in E: \|u\|_{E}\leq \rho\}.
\end{align*}
It is known that $E$ is continuously embedded into $L^{q}(\mathbb{R}^{3})$ for $q\in [2,2_{s}^{\ast}]$ $(2_{s}^{\ast}=\frac{6}{3-2s})$, then $\|u\|_{q}\leq C_{0}\|u\|_{E}$.  Since $p\in (2,2^{\ast}_{s})$, for $u\in \partial \Sigma_{\rho}$,
\begin{align*}
I_{\lambda}(u)&=\frac{1}{2}\|u\|^{2}_{E}+\frac{\lambda}{2}\int_{\mathbb{R}^{3}}V(x)u^{2}dx+\frac{1}{4}\int_{\mathbb{R}^{3}}\phi^{t}_{u}(x)u^{2}dx
-\int_{\mathbb{R}^{3}}F(x,u)dx\\
&\geq \frac{(1-\varepsilon)\rho^{2}}{2}-\frac{C_{\varepsilon}C_{0}}{p}\rho^{p}.
\end{align*}
For $\varepsilon\in (0,1)$ and sufficiently small $\rho$, the conclusion holds.
\end{proof}

\begin{lemma}\label{lambda}
Assume that $(A3)$ and $(A4)$ hold. Then there exists $e\in E$ with $\|e\|_{E}>\rho$ such that $I_{\lambda}(e)<0$ for fixed $\lambda\in(0,1]$, where $\rho$ is the same as in Lemma \ref{geometry}.
\begin{proof}
By $(A3)$, there exists a constant $C>0$ such that
\begin{align}\label{lower bound}
F(x,u)\geq C|u|^{\mu}, \ u\in \mathbb{R}.
\end{align}
By (\ref{LM}), (\ref{dt}),
\begin{align}\label{LMdt}
\int_{\mathbb{R}^{3}}\phi^{t}_{u}u^{2}dx=\|\phi^{t}_{u}\|^{2}_{D^{t,2}}\leq C\|u\|^{4}_{H^{s}}.
\end{align}
For $\xi> 0$ and $v\in C_{0}^{\infty}(\mathbb{R}^{3})$, by (\ref{functional}), (\ref{lower bound}) and (\ref{LMdt}), we have
\begin{align*}
I_{\lambda}(\xi v)&=\frac{\xi^{2}}{2}\|v\|^{2}_{H^{s}_{\lambda}}+\frac{\xi^{2}}{2}\|v\|_{2}^{2}+\frac{\xi^{4}}{4}\int_{R^{3}}\phi_{v}^{t}v^{2}dx-\int_{R^{3}}F(x,\xi v)dx\\
&\leq \frac{\xi^{2}}{2}\|v\|^{2}_{E}+\frac{\xi^{2}}{2}\|v\|_{2}^{2}+\frac{C\xi^{4}}{4}\|v\|^{4}_{H^{s}}-C\xi^{\mu}\|v\|^{\mu}_{\mu}\rightarrow -\infty
\end{align*}
as $\xi \rightarrow +\infty$.
Define a path $h: [0,1]\rightarrow E$ by $h(\eta)=\eta \xi v$. For $\xi$ large enough, we get
\begin{align*}
\|h(1)\|_{E}=\left(\int_{R^{3}}\big(|(-\Delta)^{\frac{s}{2}}h(1)|^{2}+V(x)h^{2}(1)\big)dx\right)^{\frac{1}{2}}>\rho \  \mbox{and} \
I_{\lambda}(h(1))<0.
\end{align*}
Choose $e=h(1)$, we obtain the conclusion.
\end{proof}
\end{lemma}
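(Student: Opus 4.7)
The goal is to exhibit a single direction in $E$ along which $I_\lambda$ eventually becomes negative, with the displacement exceeding the radius $\rho$ from Lemma \ref{geometry}. Because $I_\lambda$ contains the nonlocal Coulomb-type term $\tfrac14\int \phi_u^t u^2\,dx$, the key feature to exploit is that this term is quartic in $u$, whereas assumption $(A3)$ with $\mu>4$ forces $F(x,u)$ to grow strictly faster than quartically.

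The plan is as follows. First I would integrate the Ambrosetti--Rabinowitz condition $(A3)$: writing the inequality $\mu F(x,u) \leq u f(x,u) = u\,\partial_u F(x,u)$ as $\partial_u \log F \geq \mu/u$ on $u>0$ (and symmetrically on $u<0$), I obtain the standard lower bound
\[
F(x,u) \geq C\,|u|^{\mu} \quad \text{for } |u|\geq 1,
\]
with some $C>0$ independent of $x$ (using continuity/positivity of $F$ on $|u|=1$). Next I would pick any nontrivial test function $v \in C_0^\infty(\mathbb{R}^3)$, say with $\|v\|_\mu = 1$, and examine the ray $\xi \mapsto I_\lambda(\xi v)$ for $\xi>0$. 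Using \eqref{LM}, the Poisson-type contribution satisfies $\int \phi^t_{\xi v}(\xi v)^2\,dx = \xi^4 \|\phi^t_v\|_{D^{t,2}}^2 \leq C\xi^4 \|v\|_{H^s}^4$. Putting everything together,
\[
I_\lambda(\xi v) \leq \tfrac{\xi^2}{2}\bigl(\|v\|_E^2 + \|v\|_2^2\bigr) + \tfrac{C\xi^4}{4}\|v\|_{H^s}^4 - C\xi^\mu \int_{\{|\xi v|\geq 1\}} |v|^\mu\,dx + O(1).
\]
Since $\mu>4$ and $v\not\equiv 0$, the negative $\xi^\mu$ term dominates, so $I_\lambda(\xi v)\to-\infty$ as $\xi\to+\infty$; in particular $I_\lambda(\xi v)<0$ and $\|\xi v\|_E > \rho$ for $\xi$ sufficiently large. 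Setting $e:=\xi v$ gives the conclusion, and the choice of $\xi$ can clearly be made uniformly in $\lambda\in(0,1]$ because the coefficient of $\xi^2$ absorbs $\lambda V$ into the $\|\cdot\|_E$ norm (with $\lambda\leq 1$).

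The main obstacle, and the reason the hypothesis $\mu>4$ is imposed rather than the usual $\mu>2$, is precisely the quartic growth of the nonlocal term $\int \phi_u^t u^2\,dx$: without $\mu>4$, this repulsive quartic term could dominate and keep $I_\lambda$ bounded below along every ray, destroying the mountain-pass geometry. Once this comparison of exponents is secured, every other step is a straightforward uniform estimate.
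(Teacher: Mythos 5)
Your proposal is correct and follows essentially the same route as the paper: integrate the Ambrosetti--Rabinowitz condition to get $F(x,u)\gtrsim |u|^{\mu}$, bound the nonlocal term by $C\xi^{4}\|v\|_{H^{s}}^{4}$ via \eqref{LM}, and let the $\xi^{\mu}$ term with $\mu>4$ dominate along a ray. Your restriction of the lower bound on $F$ to $|u|\geq 1$ (with the integral over $\{|\xi v|\geq 1\}$) is in fact slightly more careful than the paper's \eqref{lower bound}, which is stated for all $u$ but only holds for $|u|\geq 1$.
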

\section{Proof of Theorem \ref{theorem}}
\begin{lemma}\label{first}
Assume that \emph{(V)}, $(A1)$, $(A3)$ hold. Then $I_{\lambda}$ satisfies the Palais-Smale condition on $E$ for fixed $\lambda\in (0,1]$.
\end{lemma}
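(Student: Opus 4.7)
The plan is to follow the standard Palais-Smale scheme. Given $\{u_n\}\subset E$ with $I_\lambda(u_n)$ bounded and $I'_\lambda(u_n)\to 0$ in $E^*$, I would first invoke the Ambrosetti-Rabinowitz-type condition (A3) to bound $\{u_n\}$ in $E$. A direct computation gives
\begin{align*}
\mu I_\lambda(u_n)-\langle I'_\lambda(u_n),u_n\rangle
&=\left(\tfrac{\mu}{2}-1\right)\bigl(\|u_n\|^2_{H^s_\lambda}+\|u_n\|_2^2\bigr)+\left(\tfrac{\mu}{4}-1\right)\int_{\mathbb{R}^3}\phi^t_{u_n}u_n^2\,dx\\
&\quad+\int_{\mathbb{R}^3}\bigl(f(x,u_n)u_n-\mu F(x,u_n)\bigr)dx.
\end{align*}
By (A3) the last integrand is nonnegative, and both coefficients on the first line are positive because $\mu>4$; together with the elementary inequality $\|u_n\|^2_{H^s_\lambda}\geq \lambda \|u_n\|^2_E$ (valid for $\lambda\in(0,1]$), the usual quadratic argument forces $\|u_n\|_E$ to be bounded.

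Condition (V) together with Lemma 2.3 of \cite{TK} supplies a compact embedding $E\hookrightarrow L^q(\mathbb{R}^3)$ for every $q\in[2,2^*_s)$. Passing to a subsequence, $u_n\rightharpoonup u$ in $E$ and $u_n\to u$ in $L^q(\mathbb{R}^3)$ for each such $q$; the hypothesis $2t+4s>3$ places $12/(3+2t)\in(2,2^*_s)$, so the strong convergence also holds in that intermediate space. From the identity
\begin{align*}
\|u_n-u\|^2_{H^s_\lambda}+\|u_n-u\|_2^2
&=\langle I'_\lambda(u_n)-I'_\lambda(u),u_n-u\rangle-\int_{\mathbb{R}^3}(\phi^t_{u_n}u_n-\phi^t_u u)(u_n-u)\,dx\\
&\quad+\int_{\mathbb{R}^3}(f(x,u_n)-f(x,u))(u_n-u)\,dx,
\end{align*}
I would argue that each term on the right vanishes: the first because $I'_\lambda(u_n)\to 0$ in $E^*$ while $u_n-u\rightharpoonup 0$ in $E$, and the $f$-term by the growth bound (A1) combined with the strong $L^2$ and $L^p$ convergence.

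The main obstacle is the nonlocal cross term, for which I would write
\begin{align*}
\int_{\mathbb{R}^3}(\phi^t_{u_n}u_n-\phi^t_u u)(u_n-u)\,dx
=\int_{\mathbb{R}^3}(\phi^t_{u_n}-\phi^t_u)u_n(u_n-u)\,dx+\int_{\mathbb{R}^3}\phi^t_u(u_n-u)^2\,dx
\end{align*}
and apply H\"older's inequality with exponents $\bigl(2^*_t,\tfrac{12}{3+2t},\tfrac{12}{3+2t}\bigr)$, whose reciprocals sum to one. The factors $\|\phi^t_{u_n}-\phi^t_u\|_{2^*_t}$ and $\|\phi^t_u\|_{2^*_t}$ stay uniformly bounded thanks to (\ref{phi}) and Lemma \ref{best}, while the strong convergence $\|u_n-u\|_{12/(3+2t)}\to 0$ kills the remaining factors. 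Collecting the three vanishing contributions gives $\|u_n-u\|_{H^s_\lambda}\to 0$, and since $\|u_n-u\|^2_E\leq \lambda^{-1}\|u_n-u\|^2_{H^s_\lambda}$ for $\lambda\in(0,1]$, we conclude $u_n\to u$ strongly in $E$.
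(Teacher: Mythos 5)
Your proposal is correct and follows essentially the same route as the paper: boundedness of the Palais--Smale sequence via the Ambrosetti--Rabinowitz condition (A3) and the inequality $\|u_n\|^2_{H^s_\lambda}\geq\lambda\|u_n\|^2_E$, then the compact embedding $E\hookrightarrow L^q(\mathbb{R}^3)$ for $q\in[2,2^*_s)$ and the same norm identity, with the nonlocal term killed by the generalized H\"older inequality with exponents $\bigl(2^*_t,\tfrac{12}{3+2t},\tfrac{12}{3+2t}\bigr)$ and the strong convergence in $L^{12/(3+2t)}$. The only (immaterial) difference is your algebraic splitting $\phi^t_{u_n}u_n-\phi^t_u u=(\phi^t_{u_n}-\phi^t_u)u_n+\phi^t_u(u_n-u)$, where the paper simply estimates $\int\phi^t_{u_n}u_n(u_n-u)$ and $\int\phi^t_u u(u_n-u)$ separately.
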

\begin{proof}
Let $\{u_{n}\}$ be a Palais-Smale sequence in $E$, i.e., $I_{\lambda}(u_{n})$ is bounded and $I'_{\lambda}(u_{n})\rightarrow 0$. We will show that $\{u_{n}\}$
has a convergent subsequence in $E$.
Then
\begin{align}\label{ps}
C+\|u_{n}\|_{E}&\geq I_{\lambda}(u_{n})-\frac{1}{\mu}\langle I'_{\lambda}(u_{n}),u_{n}\rangle\nonumber\\
&=\left(\frac{1}{2}-\frac{1}{\mu}\right)\|u_{n}\|^{2}_{H^{s}_{\lambda}}+\left(\frac{1}{2}-\frac{1}{\mu}\right)\|u_{n}\|^{2}_{2}+\left(\frac{1}{4}-\frac{1}{\mu}\right)\int_{R^{3}}\phi^{t}_{u_{n}}u^{2}_{n}dx\nonumber\\
&\quad+\int_{R^{3}}\left(\frac{u_{n}f(x,u_{n})}{\mu}-F(x,u_{n})\right)dx\nonumber\\
&\geq \left(\frac{1}{2}-\frac{1}{\mu}\right)\lambda\|u_{n}\|^{2}_{E}.
\end{align}
This implies that $\{u_{n}\}$ is bounded in $E$. \\
Up to a subsequence, we assume that $u_{n}\rightharpoonup u$ in $E$. Since $E$ is compactly embedded in $L^{p}(\mathbb{R}^{3})$ for $2\leq p<2^{\ast}_{s}$, then $u_{n}\rightarrow u$ in $L^{p}(\mathbb{R}^{3})$, $2\leq p<2^{\ast}_{s}$. By (\ref{langle}), (\ref{functional}), we get
\begin{align}\label{norm}
\|u_{n}-u\|^{2}_{\lambda}&=\langle I'_{\lambda}(u_{n})-I'_{\lambda}(u), u_{n}-u\rangle-\|u_{n}-u\|^{2}_{2}-\int_{\mathbb{R}^{3}}(\phi^{t}_{u_{n}}u_{n}-\phi^{t}_{u}u)(u_{n}-u)dx\nonumber\\
&\quad+\int_{\mathbb{R}^{3}}(f(x,u_{n})-f(x,u))(u_{n}-u)dx.
\end{align}
Clearly, we have
\begin{align}\label{have}
\langle I'_{\lambda}(u_{n})-I'_{\lambda}(u), u_{n}-u\rangle \rightarrow0 \ \ \mbox{and} \ \|u_{n}-u\|^{2}_{2}\rightarrow 0  \ \mbox{as}\  n\rightarrow \infty.
\end{align}
By the generalization of H$\ddot{\mbox{o}}$lder inequality, Lemma \ref{best} and (\ref{phi}), it follows that
\begin{align*}
\left|\int_{\mathbb{R}^{3}}\phi^{t}_{u_{n}}u_{n}(u_{n}-u)dx\right|&\leq \|\phi^{t}_{u_{n}}\|_{2^{\ast}_{t}}\|u_{n}\|_{\frac{12}{3+2t}}\|u_{n}-u\|_{\frac{12}{3+2t}}\\
&\leq C\|\phi^{t}_{u_{n}}\|_{D^{t,2}}\|u_{n}\|_{\frac{12}{3+2t}}\|u_{n}-u\|_{\frac{12}{3+2t}}\\
&\leq C\|u_{n}\|^{3}_{\frac{12}{3+2t}}\|u_{n}-u\|_{\frac{12}{3+2t}}\\
&\leq C\|u_{n}\|^{3}_{E}\|u_{n}-u\|_{\frac{12}{3+2t}}.
\end{align*}
Similarly,
\begin{align*}
\left|\int_{\mathbb{R}^{3}}\phi^{t}_{u}u(u_{n}-u)dx\right|\leq C\|u\|^{3}_{E}\|u_{n}-u\|_{\frac{12}{3+2t}}.
\end{align*}
We have
\begin{align}\label{put}
&\left|\int_{\mathbb{R}^{3}}(\phi^{t}_{u_{n}}u_{n}-\phi^{t}_{u}u)(u_{n}-u)dx\right|\leq \left|\int_{\mathbb{R}^{3}}\phi^{t}_{u_{n}}u_{n}(u_{n}-u)dx\right|+\left|\int_{\mathbb{R}^{3}}\phi^{t}_{u}u(u_{n}-u)dx\right|\rightarrow 0 \ \mbox{as}\ n\rightarrow \infty.
\end{align}
By $(A1)$, H$\ddot{\mbox{o}}$lder inequality and Minkowski inequality,
\begin{align}\label{f}
&\left|\int_{\mathbb{R}^{3}}(f(x,u_{n})-f(x,u))(u_{n}-u)dx\right|\nonumber\\
&\leq C_{1}\int_{\mathbb{R}^{3}}(|u_{n}|+|u|)|u_{n}-u|dx+C_{1}\int_{\mathbb{R}^{3}}(|u_{n}|^{p-1}+|u|^{p-1})|u_{n}-u|dx\nonumber\\
&\leq C_{1}\||u_{n}|+|u|\|_{2}\|u_{n}-u\|_{2}+C_{1}\||u_{n}|^{p-1}+|u|^{p-1}\|_{\frac{p}{p-1}}\|u_{n}-u\|_{p}\nonumber\\
&\leq C_{1}(\|u_{n}\|_{2}+\|u\|_{2})\|u_{n}-u\|_{2}+C_{1}(\|u_{n}\|^{p-1}_{p}+\|u\|^{p-1}_{p})\|u_{n}-u\|_{p}\nonumber\\
 &\leq C(\|u_{n}\|_{E}+\|u\|_{E})\|u_{n}-u\|_{2}+C(\|u_{n}\|^{p-1}_{E}+\|u\|^{p-1}_{E})\|u_{n}-u\|_{p}\rightarrow 0 \ \mbox{as} \ n\rightarrow \infty.
\end{align}
By (\ref{norm}), (\ref{have}), (\ref{put}) and (\ref{f}), we see that $\{u_{n}\}$ converges strongly in $E$ for fixed $\lambda\in (0,1]$, therefore $I_{\lambda}$ satisfies the Palais-Smale condition on $E$ for fixed $\lambda\in (0,1]$.
\end{proof}

\begin{theorem}\label{sequence}
Assume that $(A3)$ hold. Let $\lambda_{n}\rightarrow 0$ and let $\{u_{n}\}\subset E$  be a sequence of critical points of $I_{\lambda_{n}}$ satisfying $I'_{\lambda_{n}}(u_{n})=0$ and
$I_{\lambda_{n}}(u_{n})\leq C$ for some $C$ independent of $n$. Then up to a subsequence as $n\rightarrow \infty$, $u_{n}\rightharpoonup u$ in $H^{s}(\mathbb{R}^{3})$,  $u$ is a critical point of $I$.
\end{theorem}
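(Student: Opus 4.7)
My plan proceeds in three stages: uniform boundedness of $\{u_n\}$ in $H^s(\mathbb{R}^3)$, extraction of a weakly convergent subsequence, and passage to the limit in the Euler--Lagrange equation.

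\emph{Uniform bound and extraction.} The $E$-norm estimate from Lemma \ref{first} deteriorates as $\lambda_n\to 0$, so I would bound $\|u_n\|_{H^s}$ directly. Using $I'_{\lambda_n}(u_n)=0$ and $I_{\lambda_n}(u_n)\le C$, the Ambrosetti--Rabinowitz identity yields
\begin{align*}
C &\ge I_{\lambda_n}(u_n)-\tfrac{1}{\mu}\langle I'_{\lambda_n}(u_n),u_n\rangle \\
&= \bigl(\tfrac{1}{2}-\tfrac{1}{\mu}\bigr)\|u_n\|_{H^s_{\lambda_n}}^2 + \bigl(\tfrac{1}{2}-\tfrac{1}{\mu}\bigr)\|u_n\|_2^2 + \bigl(\tfrac{1}{4}-\tfrac{1}{\mu}\bigr)\int\phi^t_{u_n}u_n^2\,dx + \int\bigl(\tfrac{u_nf(x,u_n)}{\mu}-F(x,u_n)\bigr)\,dx.
\end{align*}
Since $\mu>4$ and (A3) make every summand nonnegative, discarding the $\lambda_n V$ contribution inside $\|u_n\|_{H^s_{\lambda_n}}^2$ leaves a $\lambda_n$-independent bound on $\|u_n\|_{H^s}^2$; this is the key point enabling the limit $\lambda_n\to 0$. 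Along a subsequence $u_n\rightharpoonup u$ in $H^s(\mathbb{R}^3)$, and the local compact embedding $H^s\hookrightarrow L^q_{\mathrm{loc}}(\mathbb{R}^3)$ for $q\in[1,2^*_s)$ gives $u_n\to u$ in $L^q(K)$ for every compact $K\subset\mathbb{R}^3$, and a.e.\ after a diagonal extraction.

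\emph{Passage to the limit.} For $\varphi\in C_0^\infty(\mathbb{R}^3)$ I would pass to the limit in $\langle I'_{\lambda_n}(u_n),\varphi\rangle=0$. The quadratic terms $\int(-\Delta)^{s/2}u_n(-\Delta)^{s/2}\varphi\,dx$ and $\int u_n\varphi\,dx$ converge by weak convergence in $H^s$ and $L^2$ respectively; the perturbation $\lambda_n\int Vu_n\varphi\,dx$ vanishes because $V\varphi$ is bounded with compact support, $\|u_n\|_2$ is bounded, and $\lambda_n\to 0$; and $\int f(x,u_n)\varphi\,dx\to\int f(x,u)\varphi\,dx$ follows from (A1) together with strong local convergence via Vitali's theorem on $\operatorname{supp}\varphi$.

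\emph{The nonlocal term (main obstacle).} I expect the term $\int\phi^t_{u_n}u_n\varphi\,dx$ to require the most care. The plan is first to show $\phi^t_{u_n}\rightharpoonup\phi^t_u$ in $D^{t,2}(\mathbb{R}^3)$: estimate (\ref{LM}) makes $\|\phi^t_{u_n}\|_{D^{t,2}}$ uniformly bounded, so $\phi^t_{u_n}\rightharpoonup\psi$ along a subsequence; testing the defining identity (\ref{dt2}) against $v\in C_0^\infty$ and using $u_n^2\to u^2$ in $L^1_{\mathrm{loc}}$ identifies $\psi=\phi^t_u$. Then the splitting
\begin{align*}
\int(\phi^t_{u_n}u_n-\phi^t_u u)\varphi\,dx = \int(\phi^t_{u_n}-\phi^t_u)u\varphi\,dx + \int\phi^t_{u_n}(u_n-u)\varphi\,dx
\end{align*}
reduces matters to two convergences: the first piece tends to zero by weak $L^{2^*_t}$-convergence of $\phi^t_{u_n}-\phi^t_u$ tested against $u\varphi\in L^{(2^*_t)'}(\operatorname{supp}\varphi)$, and the second by H\"older's inequality using $\|\phi^t_{u_n}\|_{2^*_t}$ bounded (Lemma \ref{best}) together with the strong convergence $u_n\to u$ in $L^{(2^*_t)'}(\operatorname{supp}\varphi)$ (valid since $(2^*_t)'=6/(3+2t)<2^*_s$). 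Density of $C_0^\infty(\mathbb{R}^3)$ in $H^s(\mathbb{R}^3)$ then extends $\langle I'(u),\varphi\rangle=0$ to all test functions, proving $u$ is a critical point of $I$.
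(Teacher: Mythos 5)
Your proposal is correct and follows the same overall route as the paper: the Ambrosetti--Rabinowitz identity applied to $I_{\lambda_n}(u_n)-\frac{1}{\mu}\langle I'_{\lambda_n}(u_n),u_n\rangle$ gives a bound on $\|u_n\|_{H^s}$ independent of $\lambda_n$ (this is exactly the paper's estimate (\ref{potential})), followed by weak extraction and passage to the limit against $v\in C_0^\infty(\mathbb{R}^3)$, with density of $C_0^\infty$ in $H^s$ closing the argument. You differ in two local but genuine ways. First, where the paper simply cites Lemma 2.3 of [Teng, JDE 2016] for $\phi^t_{u_n}\rightharpoonup\phi^t_u$ in $D^{t,2}$, you reprove it: uniform boundedness of $\|\phi^t_{u_n}\|_{D^{t,2}}$ via (\ref{LM}), weak extraction, and identification of the limit by passing $u_n^2\to u^2$ in $L^1_{\mathrm{loc}}$ through the defining identity (\ref{dt2}); this makes the proof self-contained at the cost of a few lines, and the subsequent splitting of $\int(\phi^t_{u_n}u_n-\phi^t_u u)v\,dx$ into the two pieces you describe is exactly the paper's decomposition. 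Second, for the perturbation term you use $|\lambda_n\int Vu_nv\,dx|\le\lambda_n\|Vv\|_2\|u_n\|_2\to 0$, exploiting that $V$ is continuous hence bounded on $\mathrm{supp}\,v$; the paper instead writes Cauchy--Schwarz with the weights $\sqrt{V}u_n$ and $\sqrt{V}v$ and invokes the bound on $\int\lambda_nV u_n^2\,dx$ coming from (\ref{potential}). Your version is more elementary and suffices here, while the paper's would survive even without a uniform $L^2$ bound on $u_n$ over $\mathrm{supp}\,v$. You are also more explicit than the paper about the convergence of $\int f(x,u_n)v\,dx$, which the paper passes over in silence; your appeal to (A1) plus local strong convergence is the right way to fill that in.
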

\begin{proof}
By $I'_{\lambda_{n}}(u_{n})=0$ and $I_{\lambda_{n}}(u_{n})\leq C$,  we have
\begin{align}\label{potential}
C&\geq I_{\lambda_{n}}(u_{n})-\frac{1}{\mu}\langle I'_{\lambda_{n}}(u_{n}), u_{n}\rangle\nonumber\\
&=\left(\frac{1}{2}-\frac{1}{\mu}\right)\|u_{n}\|^{2}_{H^{s}_{\lambda}}+\left(\frac{1}{2}-\frac{1}{\mu}\right)\|u_{n}\|^{2}_{2}+\left(\frac{1}{4}-\frac{1}{\mu}\right)\int_{R^{3}}\phi^{t}_{u_{n}}u^{2}_{n}dx\nonumber\\
&\quad+\int_{R^{3}}\left(\frac{u_{n}f(x,u_{n})}{\mu}-F(x,u_{n})\right)dx\nonumber\\
&\geq \left(\frac{1}{2}-\frac{1}{\mu}\right)\|u_{n}\|^{2}_{H^{s}}+\left(\frac{1}{2}-\frac{1}{\mu}\right)\int_{R^{3}}\lambda_{n}V(x)u^{2}_{n}dx.
\end{align}
Then up to a subsequence, we have $u_{n}\rightharpoonup u$ in $H^{s}(\mathbb{R}^{3})$. By Lemma 2.3 in \cite{FSP},  $\phi^{t}_{u_{n}}\rightharpoonup\phi^{t}_{u}$ in $D^{t,2}(\mathbb{R}^{3})$. Taking $v\in C^{\infty}_{0}(\mathbb{R}^{3})$, then
\begin{align*}
\int_{\mathbb{R}^{3}}\phi^{t}_{u_{n}}uvdx\rightarrow \int_{\mathbb{R}^{3}}\phi^{t}_{u}uvdx, \ \mbox{as} \ n\rightarrow \infty.
\end{align*}
From the generalization of H$\ddot{\mbox{o}}$lder inequality, it follows that
\begin{align*}
\left|\int_{\mathbb{R}^{3}}\phi^{t}_{u_{n}}(u_{n}-u)vdx\right|&\leq \|\phi^{t}_{u_{n}}\|_{2^{\ast}_{t}}\|u_{n}-u\|_{L^{12/(3+2t)}(\Omega)}\|v\|_{L^{12/(3+2t)}(\Omega)}\rightarrow 0 \ \mbox{as} \ n\rightarrow \infty,
\end{align*}
where $\Omega$ is the support of $v$. Then,
\begin{align*}
\left|\int_{\mathbb{R}^{3}}\phi^{t}_{u_{n}}u_{n}vdx-\int_{\mathbb{R}^{3}}\phi^{t}_{u}uvdx\right|\leq \left|\int_{\mathbb{R}^{3}}(\phi^{t}_{u_{n}}-\phi^{t}_{u})uvdx\right|+\left|\int_{\mathbb{R}^{3}}\phi^{t}_{u_{n}}(u_{n}-u)vdx\right|\rightarrow 0
\end{align*}
as $n\rightarrow \infty$, for all $v\in C^{\infty}_{0}(\mathbb{R}^{3})$. \\
By (\ref{langle}),  (\ref{functional}),
\begin{align*}
\langle I'_{\lambda_{n}}(u_{n}),v\rangle=\int_{\mathbb{R}^{3}}\left((-\Delta)^{\frac{s}{2}}u_{n}(-\Delta)^{\frac{s}{2}}v+u_{n}v+\phi^{t}_{u_{n}}u_{n}v-f(x,u_{n})v\right)dx+\lambda_{n}\int_{\mathbb{R}^{3}}V(x)u_{n}vdx,
\end{align*}
where $v\in C^{\infty}_{0}(\mathbb{R}^{3})$. By (\ref{potential}), H$\ddot{\mbox{o}}$lder inequality,
\begin{align*}
\lambda_{n}\int_{\mathbb{R}^{3}}V(x)u_{n}v=\lambda_{n}\int_{\mathbb{R}^{3}}(\sqrt{V(x)}u_{n})(\sqrt{V(x)}v)dx\leq \lambda^{1/2}_{n}\left(\int_{\mathbb{R}^{3}}\lambda_{n}V(x)v^{2}dx\right)^{1/2}\left(\int_{\mathbb{R}^{3}}V(x)v^{2}dx\right)^{1/2}
\end{align*}
$\rightarrow 0$ as $n\rightarrow \infty$. Thus, we see that $I'(u)v=0$ for all $v\in C^{\infty}_{0}(\mathbb{R}^{3})$. It is known that $C^{\infty}_{0}(\mathbb{R}^{3})$ is dense in $H^{s}(\mathbb{R}^{3})$, see Theorem 2.4 in \cite{DPV}. Therefore, $I'(u)v= 0$ for all $v\in H^{s}(\mathbb{R}^{n})$, $u$ is a critical point of $I$.
\end{proof}

\begin{lemma}\label{fractional}\emph{(Lemma 2.3 in \cite{Avenia}).}
Let $B_{\sigma}(x)$  be the open ball in $\mathbb{R}^{3}$ of radius $\sigma$ centred at $x$. If $\{u_{n}\}$ is bounded in $H^{s}(\mathbb{R}^{3})$ and for $q\in [2,2^{\ast}_{s})$, we have
\begin{align}
\sup_{x\in \mathbb{R}^{3}}\int_{B_{\sigma}(x)}|u_{n}|^{q}\rightarrow 0 \ \mbox{as} \ n\rightarrow \infty,
\end{align}
then $u_{n}\rightarrow 0$ in $L^{r}(\mathbb{R}^{3})$ for $r\in (2,2^{\ast}_{s})$.
\end{lemma}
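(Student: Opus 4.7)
The plan is to establish $L^{r_0}$-convergence for a well-chosen exponent $r_0 \in (2, 2^*_s)$ via a bounded-overlap covering argument, and then to deduce convergence for all $r \in (2, 2^*_s)$ by a standard interpolation. Cover $\mathbb{R}^3$ by balls $B_\sigma(x_i)$ (where $\sigma$ is as in the hypothesis) with finite overlap multiplicity $M$, and set $r_0 := 2 + q(2^*_s-2)/2^*_s$, which lies in $(2, 2^*_s)$ since $q \in [2, 2^*_s)$.

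On each ball I would apply H\"older's interpolation inequality between $L^q$ and $L^{2^*_s}$:
\begin{equation*}
\|u\|_{L^{r_0}(B_\sigma(x_i))}^{r_0} \leq \|u\|_{L^q(B_\sigma(x_i))}^{r_0-2}\,\|u\|_{L^{2^*_s}(B_\sigma(x_i))}^{2},
\end{equation*}
corresponding to the choice $\theta$ in $\tfrac{1}{r_0} = \tfrac{\theta}{q} + \tfrac{1-\theta}{2^*_s}$ satisfying $r_0(1-\theta)=2$. This choice is the whole point: the exponent $2$ on the $L^{2^*_s}$-factor matches what the local fractional Sobolev embedding delivers, namely $\|u\|_{L^{2^*_s}(B_\sigma(x_i))}^{2} \leq C\,\|u\|_{H^s(B_\sigma(x_i))}^{2}$, with $C$ independent of $i$ by translation invariance.

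Summing over $i$ with $\int_{\mathbb{R}^3}|u|^{r_0}\leq \sum_i \int_{B_\sigma(x_i)}|u|^{r_0}$ and pulling out the supremum of the local $L^q$-norms yields
\begin{equation*}
\|u\|_{L^{r_0}(\mathbb{R}^3)}^{r_0} \leq C\Big(\sup_{x\in\mathbb{R}^3}\!\int_{B_\sigma(x)}|u|^{q}\,dy\Big)^{(r_0-2)/q}\sum_i \|u\|_{H^s(B_\sigma(x_i))}^{2}.
\end{equation*}
Applied to $u=u_n$, the $H^s$-boundedness keeps the right-hand sum bounded (see the last paragraph) while the hypothesis drives the supremum to zero, so $u_n\to 0$ in $L^{r_0}(\mathbb{R}^3)$. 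For arbitrary $r\in (2, 2^*_s)$, standard interpolation gives either $\|u_n\|_{L^r} \leq \|u_n\|_{L^2}^{\alpha}\|u_n\|_{L^{r_0}}^{1-\alpha}$ if $r<r_0$ or $\|u_n\|_{L^r} \leq \|u_n\|_{L^{r_0}}^{\beta}\|u_n\|_{L^{2^*_s}}^{1-\beta}$ if $r>r_0$, and the non-vanishing factors are uniformly bounded along $\{u_n\}$ by $H^s$-boundedness and Lemma \ref{best}.

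The main obstacle is the bound $\sum_i \|u\|_{H^s(B_\sigma(x_i))}^{2} \leq CM\|u\|_{H^s(\mathbb{R}^3)}^{2}$: additivity of a fractional Sobolev norm across a bounded-overlap cover is not automatic because the Gagliardo seminorm is nonlocal. It nevertheless holds because the sum $\sum_i \iint_{B_i\times B_i} |u(x)-u(y)|^{2}/|x-y|^{3+2s}\,dx\,dy$ is automatically supported on pairs with $|x-y|\leq 2\sigma$, at which distance the covering multiplicity is bounded by $M$; hence only the short-range part of $[u]_{H^s(\mathbb{R}^3)}^{2}$ contributes and the sum is controlled.
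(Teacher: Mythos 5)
The paper offers no proof of this lemma — it is imported verbatim as Lemma 2.3 of \cite{Avenia} — so there is nothing in-paper to compare against; your self-contained argument is correct and is essentially the standard Lions-type covering-plus-interpolation proof used in that reference. In particular you correctly identify and dispose of the only genuinely fractional obstruction: the bound $\sum_i\|u\|^2_{H^{s}(B_{\sigma}(x_i))}\leq CM\|u\|^2_{H^{s}(\mathbb{R}^{3})}$ is not automatic for the nonlocal Gagliardo seminorm, but holds because each pair $(x,y)$ with $|x-y|<2\sigma$ lies in at most $M$ of the sets $B_i\times B_i$, and your exponent $r_{0}$ is exactly the one making the interpolation close with the local Sobolev embedding.
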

\textbf{Proof of Theorem 1.}
Choose $\phi\in C^{\infty}_{0}(\mathbb{R}^{3})$ and $\xi>0$. Define a path $h:[0,1]\rightarrow E$ by $h(\eta)=\eta\xi\phi$. When $\xi$ is large enough, by lemma \ref{lambda}, we have $I_{\lambda}(h(1))<0$, $\|h(1)\|_{E}>\rho$ for small $\rho$ and $\sup_{\eta\in [0,1]}I_{\lambda}(\gamma(\eta))\leq c$ for some $c$ independent of $\eta\in [0,1]$. Define
\begin{align*}
c_{\lambda}=\inf_{\gamma\in \Gamma}\sup_{\eta\in [0,1]}I_{\lambda}(\gamma(\eta)),
\end{align*}
where $\Gamma=\{\gamma|\gamma\in C([0,1],E), \gamma(0)=0, \gamma(1)=\xi\phi\}$. By Lemma \ref{geometry}, the mountain pass theorem holds and $c_{\lambda}$ is a critical value of $I_{\lambda}$. Therefore, we can choose $\lambda_{n}\rightarrow 0$, and a sequence of critical points  $\{u_{n}\}\subset E$  satisfying $I'_{\lambda_{n}}(u_{n})=0$ and $I_{\lambda_{n}}(u_{n})\leq c$.
By Theorem \ref{sequence}, up to a subsequence $u_{n}\hookrightarrow u$, and $u$ is a critical point of $I$ in $H^{s}(\mathbb{R}^{3})$. We need to show that $u\neq0$. Note that $H^{s}(\mathbb{R}^{3})\hookrightarrow L^{q}(\mathbb{R}^{3})$ for any $q\in [2,2^{\ast}_{s}]$, then
\begin{align*}
0=I'_{\lambda_{n}}(u_{n})u_{n}&=\int_{\mathbb{R}^{3}}(|(-\Delta)^{\frac{s}{2}}u_{n}|^{2}+u^{2}_{n})dx+\int_{\mathbb{R}^{3}}\phi^{t}_{u_{n}}(x)u^{2}_{n}dx-\int_{\mathbb{R}^{3}}f(x,u_{n})dx\\
&\quad+\lambda_{n}\int_{\mathbb{R}^{3}}V(x)u^{2}_{n}dx\\
&\geq \|u_{n}\|^{2}_{H^{s}}-\varepsilon\|u_{n}\|^{2}_{2}-C_{\varepsilon}\|u_{n}\|^{p}_{p}\\
&\geq C\|u_{n}\|^{2}_{p}-C_{\varepsilon}\|u_{n}\|^{p}_{p}.
\end{align*}
Then $\|u_{n}\|_{p}\geq (\frac{C}{C_{\varepsilon}})^{\frac{1}{p-2}}$. If $u=0$, then given any $x\in \mathbb{R}^{3}$,  $u_{n}\rightharpoonup 0$ in $H^{s}(B_{\sigma}(x))$, since the embedding $H^{s}(\mathbb{R}^{3})\hookrightarrow L^{q}(\mathbb{R}^{3})$
 is locally compact for $q\in [1, 2^{\ast}_{s})$, then $u_{n}\rightarrow 0$ in $L^{p}(B_{\sigma}(x))$. By Lemma \ref{fractional}, we have $u_{n}\rightarrow 0$ in $L^{p}(\mathbb{R}^{3})$, which is a contradiction with $\|u_{n}\|_{p}\geq (\frac{C}{C_{\varepsilon}})^{\frac{1}{p-2}}$.
 Therefore, $u\neq 0$. The proof is complete.

%\bibliographystyle{elsarticle-num}
%\bibliography{reference}

\end{document}